\documentclass[14pt, twoside, a4paper]{article} %
\usepackage{amsmath}

\usepackage{tikz}
\usepackage{graphicx}
\usepackage{subfigure}
\usepackage{calc}
\usepackage{float}

\usepackage[T1]{fontenc}
\usepackage[utf8]{inputenc}
\usepackage[english]{babel}

\let\getprepared\relax
\ifx\TestIngCommand\undefined\relax
\else\input ../../../proc/ppreamb-book.tex 
  \input init.tex \fi
\let\TestIngCommand\undefined
\newtheorem{example}{Example}

\newtheorem{conj}{Conjecture}
\usepackage{amsmath,amscd,amssymb}                                         
\usepackage{graphics}                                                     
\newtheorem{theo}{Theorem}                                                 
\newtheorem{lem}{Lemma}                                                    
\newtheorem{prop}{Proposition}                                             

\newtheorem{proof}{Proof}                                              
\newskip\ttglue\ttglue=.5em plus.25em minus.15em                           
\def\firstname#1{\def\FIRSTNAME{#1}\ignorespaces}
\def\lastname#1{\def\LASTNAME{#1}\ignorespaces}
\def\middleinitial#1{\def\MIDDLEINI{#1}\ignorespaces}
\def\department#1{\def\DEPARTMENT{#1}\ignorespaces}
\def\institute#1{\def\INSTITUTE{#1}\ignorespaces}
\def\address#1{\def\ADDRESS{#1}\ignorespaces}
\def\country#1{\def\COUNTRY{#1}\ignorespaces}
\def\otheraffiliation#1{\def\OTHERAFFILIATION{#1}\ignorespaces}
\def\email#1{\def\EMAIL{#1}\ignorespaces}
\newcount\autcount\autcount=0                                              
\newcount\affcount\affcount=0                                              
\newcount\numcount\newcount\nummcount                                      
\newcount\nummmcount\newcount\nummmmcount                                  
\def\writename#1#2{\ \kern-1ex\hbox{
  \csname AUthor\the#1\endcsname\                                          
  \edef\TESTSTR{}\expandafter\ifx\csname auTHor\the#1\endcsname\TESTSTR    
  \else\csname auTHor\the#1\endcsname.\ \fi                                
  \csname authOR\the#1\endcsname$^{\csname AFF\the#1\endcsname}$
  \expandafter\ifx\csname corr\number#1\endcsname\relax                    
  \else\thanks{Corresponding author.}\ \fi                                 
  }\ifnum#1<#2, \else\ \kern-1ex\fi}
\def\writeemail#1{
  \nummcount=0\relax\nummmcount=0\relax                                    
  \loop\ifnum\nummcount<\autcount\advance\nummcount by1\relax              
    {\expandafter\ifnum\csname AFF\the\nummcount\endcsname=#1\relax        
    \global\advance\nummmcount by1\fi}\repeat                              
  \nummcount=0\relax\nummmmcount=0\relax                                   
  \loop\ifnum\nummcount<\autcount\advance\nummcount by1\relax              
    {\expandafter\ifnum\csname AFF\the\nummcount\endcsname=#1\relax        
    \global\advance\nummmmcount by1\relax\def\blank{}\expandafter          
    \ifx\csname EMAIL\the\nummcount\endcsname\blank(no e-mail)
    \else\csname EMAIL\the\nummcount\endcsname                             
    \fi                                                                    
    \ifnum\nummmmcount<\nummmcount; \fi\fi}\repeat}
\long\def\BeginAuthorList#1\EndAuthorList{#1\relax                         
  \author{\vbox{\hsize=390pt\noindent\numcount=0\relax                     
    \loop\ifnum\numcount<\autcount\advance\numcount by1\relax              
      \writename{\numcount}{\autcount}
      \repeat}\\[2mm]                                                      
    \vbox{\small\numcount=0\relax                                          
      \loop\ifnum\numcount<\affcount\advance\numcount by1\relax            
        \vbox{{\count0=\numcount\relax                                     
          \loop\expandafter\ifnum\csname AFF\the\count0\endcsname
            <\numcount\relax\advance\count0 by1\relax\repeat               
          $^{\csname AFF\the\count0\endcsname}$}
        \def\BLANK{}\expandafter\ifx\csname DEPT\the\numcount\endcsname    
          \BLANK                                                           
          \else\csname DEPT\the\numcount\endcsname, \fi                    
        \csname INST\the\numcount\endcsname,                               
        \csname ADDR\the\numcount\endcsname,                               
        \csname COUN\the\numcount\endcsname                                
        \edef\TEST{}\expandafter\ifx\csname OTHE\the\numcount\endcsname
          \TEST                                                            
          .\else;\break\csname OTHE\the\numcount\endcsname.\fi}
        \vbox{\writeemail{\numcount}}
        \repeat}\\}}
\expandafter\def\csname x1\endcsname{}
\expandafter\def\csname x2\endcsname{}
\expandafter\def\csname x3\endcsname{}
\expandafter\def\csname x4\endcsname{}
\expandafter\def\csname x5\endcsname{}
\expandafter\def\csname x6\endcsname{}
\expandafter\def\csname x7\endcsname{}
\expandafter\def\csname x8\endcsname{}
\expandafter\def\csname x9\endcsname{}
\def\Author#1#2{\global\advance\autcount by1\relax#2                       
  \expandafter\edef\csname AUthor\the\autcount\endcsname{\FIRSTNAME}
  \expandafter\edef\csname auTHor\the\autcount\endcsname{\MIDDLEINI}
  \expandafter\edef\csname authOR\the\autcount\endcsname{\LASTNAME}
  \expandafter\edef\csname EMAIL\the\autcount\endcsname{\EMAIL}
  \let\tempera\"\def\"{\string\"}\expandafter\ifx\csname x\DEPARTMENT
    \endcsname\relax                                                       
    \global\advance\affcount by1\relax\let\"\tempera                       
    \expandafter\edef\csname DEPT\the\affcount\endcsname{\DEPARTMENT}
    \expandafter\edef\csname INST\the\affcount\endcsname{\INSTITUTE}
    \expandafter\edef\csname ADDR\the\affcount\endcsname{\ADDRESS}
    \expandafter\edef\csname COUN\the\affcount\endcsname{\COUNTRY}
    \expandafter\edef\csname OTHE\the\affcount\endcsname{\OTHERAFFILIATION}
    \expandafter\edef\csname AFF\the\autcount\endcsname{\the\affcount}
  \else\expandafter\edef\csname AFF\the\autcount\endcsname{\DEPARTMENT}
  \fi\let\"\tempera\ignorespaces}
\def\CorrespondingAuthor#1#2{
  \expandafter\xdef\csname corr\number#1\endcsname{cor}
  \Author#1{#2}}
\def\PaperTitle#1{\title{\bf#1}}
\def\Category#1{\ignorespaces}
\def\keywords#1{{\noindent \emph{Keywords:}                                
  \def\BLANK{}\def\TEST{#1}\ifx\BLANK\TEST(n/a).\else#1\fi}}
\getprepared                                                               
\begin{document}                                                           

\PaperTitle{Joint distribution of leftmost digits in positional notation and Schanuels's conjecture}
\Category{(Pure) Mathematics}

\date{}

\BeginAuthorList
  \Author1{
    \firstname{Wayne}
    \lastname{Lawton}
    \middleinitial{M}   
    \department{Department of the Theory of Functions, Institute of Mathematics and Computer Science}
    \institute{Siberian Federal University}
    \otheraffiliation{}
    \address{Krasnoyarsk}
    \country{Russian Federation}
    \email{wlawton50@gmail.com}}
\EndAuthorList
\maketitle
\thispagestyle{empty}
\begin{abstract}
\noindent Assume that $n \geq 2$ and $B = (b_1,...,b_n)$ has distince 
integer entries $\geq 3.$ For
$x > 0$ let $d_B(x) := (d_{b_1}(x),...,d_{b_n}(x))$ where 
$d_{b_i}(x) \in \{1,...,b_i-1\}$ is the leftmost digit in the 
base-$b_i$ positional notation representation of $x.$
We prove that if $d_B$ is surjective, then $\ln b_i$ and $\ln b_j$ are rationally independent whenever $i \neq j.$ We prove the converse for $n = 2,$ and for $n \geq 3$ if $\{\ln p : p \mbox{ prime} \}$ is algebraically independent, a condition implied by Schanuel's conjecture about transcendental numbers.
\end{abstract}
\noindent{\bf 2010 Mathematics Subject Classification:
11A63 
11J81 
37A17 
}
\footnote{\thanks{This work is supported by the Krasnoyarsk Mathematical Center and financed by the 
Ministry of Science and Higher Education of the Russian Federation (Agreement No. 075-02-2026-1314).}}
\section{Positional notation}\label{sec1}
$\mathbb Z, \mathbb Q, \mathbb R, \mathbb R_+$ denote integer, rational, real and positive numers, and iff mean if and only if. For integers $b \geq 3,$ $e \geq 1,$ 
$j \in \{1,...,b-1\}$ define 
$d_b : \mathbb R_+ \mapsto \{1,...,b-1\}$ and 
$S(b,e,j) \subset \{1,...,b^e - 1 \}$ by
\begin{equation}\label{eq1}
	d_b(x) = j \iff \log_b(x) \in [\log_b(j), \log_b(j+1)) + \mathbb Z, \ \ x \in \mathbb R_+,
\end{equation}
\begin{equation}\label{eq2}
	S(b,e,j) := 
\bigcup_{\ell = 0}^{e-1} 
\left( b^\ell j +\{0,...,b^\ell - 1\} \right).
\end{equation}
$d_b(x)$ is the leftmost 'digit' in the base-$b$ positional notation representation of $x.$
\begin{lem}\label{lem1}
$d_{b^e}(x)$ determines $d_b(x)$ by
\begin{equation}\label{eq3}
 d_b(x) = j \iff d_{b^e}(x) \in S(b,e,j).
\end{equation}
\end{lem}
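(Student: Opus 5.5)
The plan is to reduce everything to the normalized mantissa. For $x \in \mathbb R_+$ write $x = b^{m} y$ with $m \in \mathbb Z$ and $y \in [1,b)$; this representation is unique. By (\ref{eq1}), $d_b(x) = \lfloor y \rfloor$. Similarly, write $x = (b^e)^{k} z$ with $k \in \mathbb Z$ and $z \in [1,b^e)$; then $d_{b^e}(x) = \lfloor z \rfloor =: J \in \{1,\dots,b^e-1\}$.

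The first step relates $z$ to $y$. Since $[1,b^e) = \bigcup_{\ell=0}^{e-1} [b^\ell, b^{\ell+1})$ is a disjoint union, there is a unique $\ell \in \{0,\dots,e-1\}$ with $z \in [b^\ell,b^{\ell+1})$. Then $z/b^\ell \in [1,b)$ and $x = b^{ek+\ell}(z/b^\ell)$. Uniqueness of the base-$b$ normalization gives $y = z/b^\ell$, that is, $z = b^\ell y$.

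The second step is the key floor computation. Write $j = \lfloor y\rfloor$, so $y \in [j,j+1)$ and $z \in [b^\ell j, b^\ell j + b^\ell)$. The endpoints are integers, so $J = \lfloor z \rfloor \in b^\ell j + \{0,\dots,b^\ell-1\}$. Hence $d_b(x) = j$ implies $d_{b^e}(x) \in S(b,e,j)$.

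For the converse, I will show that the sets $S(b,e,j)$, for $j=1,\dots,b-1$, are pairwise disjoint. Suppose $J \in b^\ell j + \{0,\dots,b^\ell-1\}$. Then $b^\ell \le J < b^{\ell+1}$, so $\ell$ is determined by $J$ as the number of base-$b$ digits of $J$ minus one. Once $\ell$ is fixed, $j = \lfloor J/b^\ell \rfloor$. So $J$ lies in at most one $S(b,e,j)$. Combined with the forward direction, $d_{b^e}(x) \in S(b,e,j)$ forces $d_b(x) = j$, which gives (\ref{eq3}).

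The main obstacle is only bookkeeping: checking uniqueness of $\ell$ and the disjointness of the blocks $b^\ell j+\{0,\dots,b^\ell-1\}$ across different $(\ell,j)$. Both follow from the inequality $b^\ell \le b^\ell j + t < b^{\ell+1}$ for $1\le j\le b-1$ and $0\le t\le b^\ell-1$.
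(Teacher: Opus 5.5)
Your proof is correct and is essentially the paper's argument written multiplicatively: your relation $z = b^\ell y$ for a unique $\ell \in \{0,\dots,e-1\}$ is the paper's decomposition $e^{-1}\mathbb Z = e^{-1}\{0,\dots,e-1\} + \mathbb Z$ after taking $\log_{b^e}$, followed by the same integer-endpoint floor step. The only difference is that the paper's chain of equivalences gives both directions at once, whereas you prove the forward direction and get the converse from the pairwise disjointness of the sets $S(b,e,j)$, which you verify correctly.
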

\begin{proof}
The identity $\log_b(x) = e\, \log_{b^e}(x)$ implies
$$d_b(x) = j \iff  \log_{b^e}(x) \in [\log_{b^e}(j),\log_{b^e}(j+1)) + e^{-1}\mathbb Z$$
$$\iff \log_{b^e}(x) \in [\log_{b^e}(j),\log_{b^e}(j+1))+ e^{-1}\{0,...,e-1\} + \mathbb Z$$
$$\iff  \log_{b^e}(x) \in \bigcup_{\ell = 0}^{e-1} 
[\log_{b^e}(b^\ell j),\log_{b^e}(b^\ell (j+1))) + \mathbb Z$$
$$\iff  d_{b^e}(x) \in \bigcup_{\ell = 0}^{e-1} 
\mathbb Z \cap [b^\ell j,b^\ell (j+1)) \iff  d_{b^e}(x) \in S(b,e,j).$$
\end{proof}
\section{Joint distribution for two bases}\label{sec1}
Assume that $B = (b_1,b_2)$ has distinct integer entrues $\ge 3.$ Define 
$$d_B : \mathbb R_+ \mapsto \{1,...,b_1-1\} \times \{1,...,b_2-1\}$$ 
by
$d_B(x) := (d_{b_1}(x), d_{b_2}(x)).$
$\ln b_1$ and $\ln b_2$ are rationally dependent iff there exist integers $a, e_1, e_2$ such that $a \geq 2,$ $gcd(e_1,e_2) = 1,$ and $b_i = a^{e_i}, i = 1, 2.$  Then
$b := b_1^{e_2} = b_2^{e_1}$ and
Lemma \ref{lem1} implies 
\begin{equation}\label{eq4}
 d_B(x) = (j_1,j_2) \iff d_b(x) \in
 S(b_1,e_2,j_1) \cap S(b_2,e_1,j_2).
\end{equation}
\begin{example}\label{ex1}
If $B = (4,8),$ then $a = 2, e_1 = 2, e_2 = 2$ and $b = 64.$ We used 
(\ref{eq4}) to compute $(j_1,j_2) = d_B(x)$ as a function of $d_{64}(x) \in \{1,...,63\}$
displayed in the table below. 
$d_B$ is not surjective because its image excludes the six pairs  
$(2,3), (2,6), (2,7), (3,2), (3,4), (3,5).$ 
\end{example}
$$
\begin{array}{cccccccc}
      		 &  & j_1 = 1    	 & &  j_1 =2    & & j_1 = 3      \\
  j_2 = 1     &  & 1              & &  8-11   & &    12-15  \\
  j_2 = 2     &  &  16-23      & &  2        & &   \emptyset       \\ 
  j_2 = 3     &  &  24-32      & &   \emptyset        & &  3         \\
  j_2=  4     & 	&	4	           &	&  32-39   & &  \emptyset       \\
  j_2=  5     & 	&	5		      &	&  40-47    & &   \emptyset        \\
  j_2=  6     &  &  6             & &  \emptyset       & &  48-55    \\
  j_2 =  7    &	&	7	           &	&   \emptyset         & & 56-63
\end{array}
$$
Define the circle group 
$\mathbb T := \mathbb R / \mathbb Z$ and $n$-dimensional torus group
$\mathbb T^n = \mathbb R^n / \mathbb Z^n.$
For $\omega = (\omega_1,...,\omega_n) \in \mathbb R^n$ define the homomorphism
$\varphi_{\omega} : \mathbb R \mapsto \mathbb T^n$ by 
$\varphi_\omega(t) := t\omega + \mathbb Z^n.$
\begin{lem}\label{lem2}
The image of $\varphi_\omega$ is dense in 
$\mathbb T^n$ iff $\omega_1,...,\omega_n$ are rationally indpendent.
\end{lem}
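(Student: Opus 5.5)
The plan is to identify the closure of the image of $\varphi_\omega$ as a closed connected subgroup of $\mathbb T^n$ and use Pontryagin duality in the concrete form of Fourier characters. For $k \in \mathbb Z^n$ let $\chi_k(y + \mathbb Z^n) := e^{2\pi i \langle k, y\rangle}$ denote the corresponding character of $\mathbb T^n$. Then
$$\chi_k(\varphi_\omega(t)) = e^{2\pi i t \langle k,\omega\rangle}.$$

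\emph{Necessity (dense implies independent).} Suppose $\omega_1,\dots,\omega_n$ are rationally dependent. Clearing denominators, there is a nonzero $k \in \mathbb Z^n$ with $\langle k,\omega\rangle = 0$. Then $\chi_k \circ \varphi_\omega \equiv 1$, so the image of $\varphi_\omega$ lies in the kernel of $\chi_k$. This kernel is the closed set $\{y : \langle k,y\rangle \in \mathbb Z\}/\mathbb Z^n$, and it is a proper subset of $\mathbb T^n$: taking $y = (2|k|^2)^{-1}k$ gives $\langle k,y\rangle = 1/2$, so $\chi_k(y) = -1$. Hence the image is not dense.

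\emph{Sufficiency (independent implies dense).} Suppose $\langle k,\omega\rangle \neq 0$ for every nonzero $k$. Let $H$ be the closure of the image; it is a closed subgroup of $\mathbb T^n$. Suppose $H \neq \mathbb T^n$ and pick an open ball $U$ disjoint from $H$. Choose a nonzero continuous $f \ge 0$ supported in $U$, and approximate it uniformly by a trigonometric polynomial $P = \sum_{|k|\le N} c_k \chi_k$ (Stone--Weierstrass, or Fejér kernels). Then compare two quantities.

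First, the time average of $P$ along the orbit:
$$\lim_{T\to\infty} \frac1T \int_0^T P(\varphi_\omega(t))\,dt = c_0 = \int_{\mathbb T^n} P .$$
This holds because for $k \neq 0$ the average of $e^{2\pi i t\langle k,\omega\rangle}$ tends to $0$, which is exactly where $\langle k,\omega\rangle \neq 0$ is used. This gives the standard Weyl equidistribution of the flow.

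Second, since $f$ vanishes on the orbit, $|P| \le \varepsilon$ there, so the time average is at most $\varepsilon$ in absolute value. But $\int P \ge \int f - \varepsilon$. Choosing $\varepsilon < \tfrac12 \int f$ gives a contradiction, so $H = \mathbb T^n$.

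The main obstacle is this sufficiency step, specifically the passage from character averages to an open set, which requires density of trigonometric polynomials in $C(\mathbb T^n)$. I would cite that density as standard rather than reprove it. As an alternative, one could invoke Kronecker's theorem or the classification of closed subgroups of $\mathbb T^n$ (the identity component is a subtorus, and annihilators separate points), but the Weyl averaging argument above is the most self-contained route.
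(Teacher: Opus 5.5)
Your proof is correct. The paper does not prove the lemma itself: it attributes it to Kronecker (1884) and refers to Proposition~1.5.1 of Katok--Hasselblatt. Your self-contained Weyl-criterion argument is therefore a genuinely different route.

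Your necessity direction uses a nontrivial character $\chi_k$ that is identically $1$ on the orbit. The point $y = k/(2|k|^2)$ correctly shows that $\ker\chi_k$ is a proper closed set. Your sufficiency direction compares the orbit average of a trigonometric approximant $P$ of a bump $f$ supported off the orbit closure with its space average $c_0$. The hypothesis $\langle k,\omega\rangle \neq 0$ is used exactly where it is needed, to kill the nonzero frequencies.

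Compared with the citation, your approach is self-contained modulo the density of trigonometric polynomials in $C(\mathbb T^n)$. It also gives more than the lemma asks for: the same computation plus uniform approximation yields $\frac1T\int_0^T g(\varphi_\omega(t))\,dt \to \int_{\mathbb T^n} g$ for every continuous $g$. That is, the orbit is equidistributed, not merely dense. The paper's citation is shorter but leaves the whole argument to the classical literature.

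Two cosmetic points:
\begin{itemize}
\item You never use that $H$ is a subgroup, only that it is closed with nonempty open complement.
\item Since $P$ may be complex-valued, the inequality $\int P \ge \int f - \varepsilon$ should be stated for $\mathrm{Re}\int P$. Alternatively, replace $P$ by $\mathrm{Re}\,P$, which is again a trigonometric polynomial because $\overline{\chi_k} = \chi_{-k}$. The contradiction is then $\mathrm{Re}\, c_0 \le |c_0| \le \varepsilon < \int f - \varepsilon \le \mathrm{Re}\, c_0$.
\end{itemize}
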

\begin{proof}
This landmark result was proved in 1884 by Leopold Kronecker \cite{kronecker}. A modern English langauge proof is given in
Proposition 1.5.1 in \cite{katok}.
\end{proof}
\begin{theo}\label{thm1}
If $\ln b_1$ and $\ln b_2$ are rationally dependent and integers $a, e_1, e_2$ satisfy $a \geq 2,$ 
$gcd(e_1,e_2) = 1,$ and $b_i = a^{e_i} \geq 3, i = 1, 2,$ 
then 
$(j_1,j_2) \in \{1,...,b_1-1\} \times \{1,...,b_2-1\}$
is in the image of
$d_B$ iff there exists an integer $c$ such that
\begin{equation}\label{eq5}
\frac{j_1}{j_2+1} < a^c < 
\frac{j_1+1}{j_2}.
\end{equation}
Since $a^c$ never belongs to  the open interval 
$(\frac{1}{2},1),$ $d_B$ is not surjective since $(2,3)$ is 
not contained in its image.
If $\ln b_1$ and $\ln b_2$ are rationally independent, then $d_B$ is surjective.
\end{theo}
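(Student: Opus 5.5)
Work directly in logarithms rather than through the table (\ref{eq4}). By (\ref{eq1}), $d_B(x) = (j_1,j_2)$ iff there are integers $k_1,k_2$ with
$$b_1^{k_1} j_1 \le x < b_1^{k_1}(j_1+1), \qquad b_2^{k_2} j_2 \le x < b_2^{k_2}(j_2+1).$$
So $(j_1,j_2)$ is in the image iff for some $k_1,k_2$ the two half-open intervals $[b_1^{k_1}j_1, b_1^{k_1}(j_1+1))$ and $[b_2^{k_2}j_2, b_2^{k_2}(j_2+1))$ intersect. Two half-open intervals $[u,u')$ and $[v,v')$ intersect iff $u < v'$ and $v < u'$. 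Dividing by $b_2^{k_2}$ and setting $r = b_1^{k_1}/b_2^{k_2}$, this becomes
$$\frac{j_2}{j_1+1} < r < \frac{j_2+1}{j_1},$$
i.e. $\frac{j_1}{j_2+1} < r^{-1} < \frac{j_1+1}{j_2}$. The whole proof rests on this reduction, and I would state it first.

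In the dependent case, $r^{-1} = a^{e_2k_2 - e_1k_1}$. Since $\gcd(e_1,e_2)=1$, the exponent $e_2k_2-e_1k_1$ ranges over all of $\mathbb Z$ as $k_1,k_2$ vary. Hence the set of achievable values is exactly $\{a^c : c \in \mathbb Z\}$, which gives (\ref{eq5}).

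For the non-surjectivity claim, note that $a \ge 2$. If $c \ge 0$ then $a^c \ge 1$, and if $c \le -1$ then $a^c \le 1/2$; so $a^c \notin (1/2,1)$. To use $(j_1,j_2)=(2,3)$ we need $j_1 \le b_1-1$ and $j_2 \le b_2-1$, i.e. $b_1\ge 3$ and $b_2 \ge 4$. Since the bases are distinct powers of the same $a$ and both are $\ge 3$, at least one of them is $\ge 4$. If it is $b_1$ rather than $b_2$, I would relabel, or equivalently use the pair $(3,2)$, whose condition is symmetric under $c \mapsto -c$. For $(2,3)$, condition (\ref{eq5}) reads $1/2 < a^c < 1$, which is impossible. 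I expect this index bookkeeping to be the only place requiring care: checking that the displayed pair actually lies in the codomain.

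In the independent case, the achievable values $r^{-1}$ form the set $\{\exp(k_2\ln b_2 - k_1\ln b_1)\}$. The additive group $\mathbb Z\ln b_1 + \mathbb Z\ln b_2 \subset \mathbb R$ is dense, because a subgroup of $\mathbb R$ generated by two rationally independent reals is not cyclic. One can deduce this from Lemma \ref{lem2} with $n=1$-type reasoning: the multiples of $\ln b_2/\ln b_1$, irrational, are dense mod $1$, and scaling by $\ln b_1$ gives density of the group. The exponential of a dense set is dense in $\mathbb R_+$. Since $j_1/(j_2+1) < (j_1+1)/j_2$ always holds, the interval in (\ref{eq5}) is a nonempty open subinterval of $\mathbb R_+$, so some achievable $r^{-1}$ lies in it. Thus every pair is in the image and $d_B$ is surjective.
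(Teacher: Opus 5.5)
Your proof is correct. In the dependent case it matches the paper's argument: the paper also writes $d_B(x)=(j_1,j_2)$ as $x\in a^{e_1d_1}[j_1,j_1+1)\cap a^{e_2d_2}[j_2,j_2+1)$. It reads off (\ref{eq5}) with $c=e_2d_2-e_1d_1$ and uses $\gcd(e_1,e_2)=1$ for the converse.

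In the independent case you take a different route.
\begin{itemize}
\item The paper applies Kronecker's theorem (Lemma \ref{lem2}) to the continuous flow $t\mapsto t(1/\ln b_1,1/\ln b_2)$ on $\mathbb T^2$, and notes that each rectangle $R(j_1,j_2)$ has nonempty interior.
\item You keep the same interval-overlap criterion and only need the ratio set $\{b_2^{k_2}/b_1^{k_1}\}$ to be dense in $\mathbb R_+$. That is the one-dimensional fact that $\mathbb Z\ln b_1+\mathbb Z\ln b_2$ is dense in $\mathbb R$.
\end{itemize}
Strictly, that density fact is the discrete irrational-rotation statement, or the cyclic-or-dense dichotomy for subgroups of $\mathbb R$. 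It is not Lemma \ref{lem2} itself, which for $n=1$ is trivial. Your non-cyclic-subgroup argument is the right justification.

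Your route gives one uniform criterion for both cases: $(j_1,j_2)$ is in the image iff the group $b_1^{\mathbb Z}b_2^{\mathbb Z}$ meets $\bigl(\frac{j_1}{j_2+1},\frac{j_1+1}{j_2}\bigr)$. That group is discrete ($a^{\mathbb Z}$) in one case and dense in the other. The paper's torus formulation, by contrast, carries over verbatim to $n\ge 3$ bases in Theorem \ref{thm2}. There the exponents $k_1,\dots,k_n$ must be chosen simultaneously, so the pairwise ratio conditions no longer decouple and a genuinely multidimensional density statement is needed.

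Your codomain check is a real improvement over the statement as written. When $b_2=3$ (e.g.\ $B=(9,3)$), the pair $(2,3)$ is not in the codomain, so the paper's stated witness to non-surjectivity is vacuous. Your switch to $(3,2)$ is what is actually needed; its condition $1<a^c<2$ is likewise impossible.
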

\begin{proof} If $\ln b_1$ and $\ln b_2$ are rationally dependent 
and $x \in \mathbb R_+$ then $d_B(x) = (j_1,j_2)$ iff 
there exist integers $d_1$ and $d_2$ such that
\begin{equation}\label{eq6}
x \in a^{e_1d_1}[j_1, j_1+1) \cap a^{e_2d_2}[j_2, j_2+1).
\end{equation}
Then 
(\ref{eq5}) holds for
$c = e_2d_2 - e_1d_1.$ Conversly, if
(\ref{eq5}) holds then since $gcd(e_1,e_2) = 1,$ there exist integers $d_1$ and $d_2$ such that $c = e_2d_2 - e_1d_1$ and then (\ref{eq6}) holds.
If $\ln b_1$ and $\ln b_2$ are rationally independent. Then 
$\omega_1 := \frac{1}{\ln b_1}$ and 
$\omega_2 := \frac{1}{\ln b_2}$
are rationally independent so Lemma \ref{lem2} 
and the surjectivtivity of $\ln : \mathbb R_+ \mapsto \mathbb R$ implies that the image of the composition 
$h := \varphi_\omega \circ \ln : \mathbb R_+ \mapsto \mathbb T^2$ is dense in $\mathbb T^2.$
Partition $\mathbb T^2$ into sets 
$$R(j_1,j_2) := 
[\log_{b_1}(j_1), \log_{b_1}(j_1+1)) \times 
[\log_{b_2}(j_2), \log_{b_2}(j_2+1)) + \mathbb Z^2,
\}.$$ 
Then $d_B(x) = (j_1,j_2) \iff h(x) \in R(j_1,j_2)$ and
the surjectivity of $d_B$ follows since $h^\mathbb R_+)$ 
us dens in $\mathbb T^2$ and each $R(\j_1,j_2)$ contains
a nonempty open subset.
\end{proof}
\section{Schanuel's conjecture and its consequences}\label{sec3}
Schanuel's conjecture (\cite{lang}, p. 30--32) asserts:
If $c_1,...,c_m$ are rationaly independent complex numbers, 
then the field
$\mathbb Q(c_1,...,c_m, exp(c_1),...,exp(c_m))$ 
has transcendence degree at least $m$ over 
$\mathbb Q.$ 
\newline
\\
Let $\mathbb Q(z_1,...,z_m)$ denote the field of rational functions in variables $z_1,...,z_m$ represented by the field of meromorphic functions on the
$n$-fold product of Riemann spheres $(\mathbb C \cup \{\infty\})^m.$
Assume that $m \geq 1$ and $p_1,...,p_m$ are distinct primes,
\begin{conj}\label{conj1}
 $\ln p_1,...,\ln p_m$ are algebraically independent.
Equivalently, the homomorphism 
$\psi : \mathbb Q(z_1,...,z_m) \mapsto \mathbb Q(\ln p_1,...,\ln p_m)$ 
satisfying $\psi(z_j) = \ln p_j$ is an isomorphism.
\end{conj}
\begin{lem}\label{lem3}
Schanuel's conjecture implies conjecture \ref{conj1} 
\end{lem}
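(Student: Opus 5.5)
We apply Schanuel's conjecture with $c_i := \ln p_i$ for $i = 1,\dots,m$.

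\emph{Rational independence.} Suppose $\sum_{i=1}^m r_i \ln p_i = 0$ with $r_i \in \mathbb Q$. Clearing denominators, we may take $r_i \in \mathbb Z$. Exponentiating gives $\prod_i p_i^{r_i} = 1$. Moving the negative exponents to the other side yields an equality of two products of distinct primes, so unique factorization in $\mathbb Z$ forces every $r_i = 0$. Hence $\ln p_1,\dots,\ln p_m$ are rationally independent, and Schanuel's conjecture applies to them.

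\emph{Transcendence degree.} Since $\exp(c_i) = p_i \in \mathbb Q$, we have
$$\mathbb Q(c_1,\dots,c_m,\exp(c_1),\dots,\exp(c_m)) = \mathbb Q(\ln p_1,\dots,\ln p_m).$$
Schanuel's conjecture says this field has transcendence degree at least $m$ over $\mathbb Q$. It is generated by the $m$ elements $\ln p_1,\dots,\ln p_m$, so its transcendence degree is also at most $m$; hence it is exactly $m$. A standard fact is that every generating set of a finitely generated field extension contains a transcendence basis. Here the generating set has only $m$ elements, so that basis must be all of $\{\ln p_1,\dots,\ln p_m\}$, and these numbers are algebraically independent. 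This is the first form of Conjecture \ref{conj1}.

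\emph{The equivalent formulation.} Algebraic independence means the evaluation map $\mathbb Q[z_1,\dots,z_m] \to \mathbb R$, $z_j \mapsto \ln p_j$, is injective. An injective ring homomorphism from a domain into a field extends uniquely to the fraction field $\mathbb Q(z_1,\dots,z_m)$, because every nonzero polynomial maps to a nonzero number and so can be inverted. This extension is $\psi$. It is injective because it is a field homomorphism, and surjective onto $\mathbb Q(\ln p_1,\dots,\ln p_m)$ because its image is a field containing $\mathbb Q$ and all the $\ln p_j$. So $\psi$ is an isomorphism. Conversely, if $\psi$ is a well-defined isomorphism, then no nonzero polynomial can vanish at $(\ln p_1,\dots,\ln p_m)$, which is algebraic independence. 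The only delicate point in the whole argument is the transcendence-degree count, and it is routine.
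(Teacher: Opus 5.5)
Your proof is correct and follows the paper's route: rational independence of the $\ln p_i$ from unique factorization, then Schanuel's conjecture applied with $c_i := \ln p_i$. You also make explicit what the paper leaves implicit. Because $\exp(c_i)=p_i\in\mathbb Q$, the field collapses to $\mathbb Q(\ln p_1,\dots,\ln p_m)$; the transcendence-degree count then forces the $m$ generators to be algebraically independent; and you verify the equivalence with $\psi$ being an isomorphism.
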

\begin{proof}
The  unique factorization of positive integers 
as products of power of primes implies that 
$\ln p_1,...,\ln p_m$ are rationally independent.
Conjecture \ref{conj1} then follows form
Schanuel's conjecture by
defining $c_i := \ln p_i, i = 1,...,n.$
\end{proof}
\begin{conj}\label{conj2}
If $b_1,...,b_n$ are integers $\geq 3$
such that no pair of numbers in the set 
$\{\ln b_1,...,\ln b_n\}$ 
are rationaly dependent, then 
$\frac{1}{\ln b_1},...,\frac{1}{\ln b_n}$ 
are rationally independent.
\end{conj}
\begin{prop}\label{prop1}
Conjecture \ref{conj1} implies conjecture \ref{conj2}. 
\end{prop}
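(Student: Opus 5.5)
Write each $b_i$ via its prime factorization over a common finite set of distinct primes $p_1,\dots,p_m$ (all primes dividing $b_1\cdots b_n$): $\ln b_i = \sum_{k=1}^m a_{ik}\ln p_k$ with nonnegative integers $a_{ik}$, not all zero for fixed $i$. Let $L_i(z) := \sum_k a_{ik} z_k$ denote the corresponding nonzero linear forms in $\mathbb Q[z_1,\dots,z_m]$. The hypothesis that no two of the $\ln b_i$ are rationally dependent means, by unique factorization (the rational independence of $\ln p_1,\dots,\ln p_m$ used in Lemma \ref{lem3}), that no two of the integer vectors $(a_{i1},\dots,a_{im})$ are proportional. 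Hence the linear forms $L_1,\dots,L_n$ are pairwise non-associate irreducible elements of the UFD $\mathbb Q[z_1,\dots,z_m]$. Degree-one polynomials are irreducible, and two such are associates iff they differ by a nonzero rational scalar.

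Suppose, for contradiction, that $\sum_i r_i/\ln b_i = 0$ with rationals $r_i$ not all zero. Then $\sum_i r_i/L_i(\ln p_1,\dots,\ln p_m)=0$, i.e. $\psi(F)=0$ for the rational function $F := \sum_i r_i/L_i \in \mathbb Q(z_1,\dots,z_m)$. By Conjecture \ref{conj1}, $\psi$ is injective, so $F=0$ identically.

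The key step is to show that $F\neq 0$. Pick $i_0$ with $r_{i_0}\neq 0$ and multiply by $\prod_i L_i$:
\[
0=\sum_i r_i \prod_{k\neq i} L_k .
\]
Every term with $i\neq i_0$ contains the factor $L_{i_0}$. The polynomial identity therefore forces $L_{i_0}$ to divide $r_{i_0}\prod_{k\neq i_0}L_k$. Since $L_{i_0}$ is irreducible, hence prime in the UFD, and is not associate to any $L_k$ with $k\neq i_0$, it cannot divide that product. This is a contradiction.

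Alternatively, one can avoid divisibility language: choose a point on the hyperplane $L_{i_0}=0$ where no other $L_k$ vanishes. Such a point exists because distinct hyperplanes, together with the finitely many intersections $\{L_{i_0}=0\}\cap\{L_k=0\}$, cannot cover $\{L_{i_0}=0\}$. Near that point $F$ has a genuine pole, so $F\neq 0$.

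The main obstacle is the bookkeeping that translates pairwise rational independence of the $\ln b_i$ into pairwise non-proportionality of the exponent vectors. Both directions rest on the rational independence of the $\ln p_k$: if $(a_{ik})_k = \lambda (a_{jk})_k$ with $\lambda\in\mathbb Q$, then $\ln b_i=\lambda\ln b_j$; conversely, a rational relation between $\ln b_i$ and $\ln b_j$ lifts to proportional exponent vectors. We also need each $L_i$ to be nonzero, which holds since $b_i\geq 3$. After this, the algebraic step above is routine.
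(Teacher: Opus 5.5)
Your proof is correct and follows the same route as the paper. You factor the $b_i$ over the primes dividing $b_1\cdots b_n$, use Conjecture \ref{conj1} to pull $1/\ln b_i$ back to $1/L_i\in\mathbb Q(z_1,\dots,z_m)$, and exploit that the polar hyperplanes $\{L_i=0\}$ are pairwise distinct. Your divisibility argument in the UFD $\mathbb Q[z_1,\dots,z_m]$, or equivalently your pole argument at a point of $\{L_{i_0}=0\}$ off the other hyperplanes, supplies the justification that the paper compresses into the single word ``therefore'' when it passes from distinct polar sets $S_i$ to rational independence of the $\zeta_i$.
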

\begin{proof}
Let $p_1,...,p_m$ be the distinct primes that divide $b_1 \cdots b_n.$
Then there exist integers $e_{i,j} \geq 0, i = 1,...,n; j = 1,...,m$ such that
\begin{equation}\label{eq7}
 b_i = p_1^{e_{i,1}} \cdots p_m^{e_{i,m}}, \ \ i = 1,...,n.
\end{equation}
Conjecture \ref{conj1} implies that $\psi$ is an isomorphism hence 
$\psi^{-1}(\ln b_i) = \sum_{j = 1}^m e_{i,j} z_j,$ so
\begin{equation}\label{eq8}
 \zeta_i := \psi^{-1}\left(\frac{1}{\ln b_i}\right) = 
\left( \sum_{j = 1}^m e_{i,j} z_j \right)^{-1}, \ \ i = 1,...,n.
\end{equation}
$\zeta_i(z_1,...,z_m) = \infty$ on an
$(m-1)$-dimensional $\mathbb C$-subspaces $S_i$ of 
$\mathbb C^m \subset (\mathbb C \cup \{\infty\})^m.$
The hypothesis in conjecture \ref{conj2} implies that 
$S_1,...,S_m$ are distinct. Therefore 
$\zeta_1,...,\zeta_m$
and hence
$\frac{1}{\ln b_1},..., \frac{1}{\ln b_1}$
are rationally independent.
\end{proof}
\section{Joint distribution for $n \geq 3$ bases}\label{sec4}
Letf $B = (b_1,...,b_n)$ with distinct integer entries $\geq 3$ 
and defin $\omega := (\omega_1,...,\omega_n), \omega_i := \frac{1}{\ln b_i}.$ Then 
$
d_B : \mathbb R_+ \mapsto \{1,...,b_1-1\} \times \cdots \times \{1,...,b_n-1\}.
$
\begin{theo}\label{thm2}
If some pair of numbers in the set $\{\ln b_1,...,\ln b_n\}$ 
are rationaly dependent, then  $d_B$ is not surjective. 
If no pair of numbers in the set $\{\ln b_1,...,\ln b_n\}$ 
are rationaly dependent, then 
conjecture \ref{conj2}, and hence  Schanuel's conjecture, implies $d_B$ is surjective.
\end{theo}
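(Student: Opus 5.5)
The proof has two independent parts, and both reduce to results already established.

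\emph{First part (not surjective).} Suppose $\ln b_i$ and $\ln b_j$ are rationally dependent for some $i \neq j$. I will project onto these two coordinates. Let $\pi_{ij}$ be the projection onto coordinates $i,j$, so that $\pi_{ij}\circ d_B = d_{(b_i,b_j)}$. If $d_B$ were surjective, then $d_{(b_i,b_j)}$ would be surjective onto $\{1,\dots,b_i-1\}\times\{1,\dots,b_j-1\}$. That contradicts Theorem \ref{thm1}, which shows that in the dependent case $d_{(b_i,b_j)}$ is not surjective. For definiteness, write $b_i=a^{e_i}$ and $b_j=a^{e_j}$, ordered so that both bases are at least $4$ whenever the pair $(2,3)$ is used. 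The point needing care is that the concrete missing pair $(2,3)$ must lie in the target, so that Theorem \ref{thm1}'s remark about $(2,3)$ applies. Since the bases are distinct powers of one $a \ge 2$ and are both $\geq 3$, we get $\max(b_i,b_j)\ge 4$. I will check this and, if necessary, invoke the general criterion (\ref{eq5}) to produce a missing pair. For example, when $b_i=3$ there is no dependent partner, since $3^k\ne 3$ for $k\geq 2$ gives distinct bases that are still powers of $3$, and then $(2,3)$ is available because $9-1\ge 3$. In every case some pair fails (\ref{eq5}), so $d_B$ misses some tuple.

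\emph{Second part (surjective).} Assume no pair is rationally dependent, and assume Conjecture \ref{conj2}. Then $\omega_1,\dots,\omega_n$ are rationally independent, where $\omega_i=1/\ln b_i$. Consider $h:=\varphi_\omega\circ\ln:\mathbb R_+\to\mathbb T^n$, so that
\[
h(x)=(\log_{b_1}x,\dots,\log_{b_n}x)+\mathbb Z^n .
\]
Lemma \ref{lem2}, together with the surjectivity of $\ln$, gives that $h(\mathbb R_+)$ is dense in $\mathbb T^n$. Define the boxes
\[
R(j_1,\dots,j_n):=\prod_{i=1}^n[\log_{b_i}j_i,\log_{b_i}(j_i+1))+\mathbb Z^n .
\]
By (\ref{eq1}), $d_B(x)=(j_1,\dots,j_n)$ iff $h(x)\in R(j_1,\dots,j_n)$. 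Each box has nonempty interior, because $\log_{b_i}j_i<\log_{b_i}(j_i+1)$. Density of $h(\mathbb R_+)$ therefore puts some point of the image in each box, and so every tuple is attained. This is exactly the $n=2$ argument of Theorem \ref{thm1}, carried out in $n$ dimensions.

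The passage from Schanuel's conjecture to Conjecture \ref{conj2} is given by Lemma \ref{lem3} and Proposition \ref{prop1}, which I take as stated. The second part is routine once Conjecture \ref{conj2} is available. The main obstacle is therefore the first part's bookkeeping: choosing an explicit tuple outside the image of $d_{(b_i,b_j)}$ that actually lies in the product of digit ranges. I will handle this by applying (\ref{eq5}) directly with $(j_i,j_j)=(2,3)$ or $(3,2)$, after ordering so that the base receiving the digit $3$ is at least $4$. Neither $2/4$ nor $3/3$ leaves room for a power $a^c$ in the required open interval, so the chosen pair is missed.
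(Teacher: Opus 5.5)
Your proposal is correct and matches the paper's proof. The first assertion reduces to Theorem~\ref{thm1}, and the second is the same density argument for $h=\varphi_\omega\circ\ln$ applied to the boxes $R(j_1,\dots,j_n)$ in $\mathbb T^n$; your projection onto the two dependent coordinates, and your check that the missed pair $(2,3)$ or $(3,2)$ lies in the digit ranges because the larger of two distinct bases $\geq 3$ is at least $4$, just make explicit what the paper leaves implicit (one aside is wrong but harmless: $b_i=3$ does have dependent partners $9,27,\dots$, and you only need the base receiving the digit $3$ to be at least $4$, not both bases).
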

\begin{proof}
The first assertion follows from Theorem \ref{thm1}. Assume that no pair of numbers in the set 
$\{\ln b_1,...,\ln b_n\}$ 
are rationaly dependent and assume conjecture \ref{conj2}. 
Then the entries of 
$\omega$ are rationally independent. Therefore 
Lemma \ref{lem2} implies that the image of
$h := \psi_\omega \circ \ln : \mathbb R_+ 
\mapsto \mathbb T^n$ is dense in 
$\mathbb T^n.$ 
The same argument used in the proof
of Theorem \ref{thm1} shows that
$d_B$ is surjective.
\end{proof}

\end{document}